\newcommand{\abs}[1]{\lvert#1\rvert}
\newcounter{minutes}\setcounter{minutes}{\time}
\newcounter{hours}\setcounter{hours}{\time}
\newtheorem{theorem}{Theorem}
\newtheorem{lemma}{Lemma}
\newtheorem{proposition}{Proposition}[section]
\newtheorem{definition}{Definition}[section]
\newtheorem{remark}{Remark}[section]
\title{Invertible harmonic mappings of unit disk onto Dini smooth Jordan domains}
\author{David Kalaj}
\date{}
\begin{document}

\maketitle

\def\thefootnote{}
\footnotetext{
\texttt{\tiny File:~\jobname .tex,
          printed: \number\year-\number\month-\number\day,
          \thehours.\ifnum\theminutes<10{0}\fi\theminutes}
}
\makeatletter\def\thefootnote{\@arabic\c@footnote}\makeatother

\begin{abstract}
In this paper we extend Rado-Choquet-Kneser theorem for the mappings with weak homeomorphic Lipschitz
boundary data and Dini's smooth boundary but  without restriction on the convexity of image domain, provided that the Jacobian satisfies a certain boundary condition.
The proof is based on a recent extension of Rado-Choquet-Kneser
theorem by Alessandrini and Nesi \cite{ale} and it is used the
approximation principle.

\end{abstract}
\maketitle 
\section{Introduction}

Harmonic mappings in the plane are univalent complex-valued harmonic
functions of a complex variable. Conformal mappings are a special
case where the real and imaginary parts are conjugate harmonic
functions, satisfying the Cauchy-Riemann equations. Harmonic
mappings were studied classically by differential geometers because
they provide isothermal (or conformal) coordinates for minimal
surfaces. More recently they have been actively investigated by
complex analysts as generalizations of univalent analytic functions,
or conformal mappings. For the background to this theory we refer to
the book of Duren \cite{dure}. If $w$ is a univalent complex-valued
harmonic functions, then by Lewy's theorem (see \cite{l}), $w$ has a
non-vanishing Jacobian and consequently, according to the inverse
mapping theorem, $w$ is a diffeomorphism.
Moreover, if $w$ is a harmonic mapping of the unit disk $\mathbf U$
onto a convex Jordan domain $\Omega$, mapping the boundary $\mathbf
T=\partial \mathbf U$ onto $\partial \Omega$ homeomorphically, then
$w$ is a diffeomorphism. This is celebrated theorem of Rado, Knesser
and Choquet (\cite{knes}). This theorem has been extended in various
directions (see for example \cite{jj}, \cite{an}, \cite{sy} and
\cite{sf}). One of the recent extensions is the following
proposition, due to Nesi and Alessandrini \cite{ale}, which is one of the main
tools in proving our main result.


\begin{proposition}\label{ales} Let $F: \mathbf T\to
\gamma\subset \mathbf C$ be an orientation preserving diffeomorphism
of class $C^1$ onto a simple closed curve. Let $D$ be a bounded
domain such that $\partial D =\gamma$. Let $w=P[F]\in
C^1(\overline{\mathbf U};\mathbf C).$ The mapping $w$ is a
diffeomorphism of $\mathbf U$ onto $D$ if and only if
\begin{equation}\label{use} J_w(\zeta)
> 0\text{ everywhere on}\  \mathbf T,\end{equation} where $J_w(\zeta)
:=\lim_{r\to 1-}J_w(r\zeta)$, and $J_w(r\zeta)$ is the Jacobian of
$w$ at $r\zeta$.
\end{proposition}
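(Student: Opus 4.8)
\medskip
\noindent\emph{Proof strategy.}
I write the harmonic map as $w=h+\overline{g}$ with $h,g$ holomorphic in $\mathbf{U}$, so that $w_z=h'$, $w_{\overline z}=\overline{g'}$ and $J_w=|h'|^2-|g'|^2$. The necessity of \eqref{use} is the easy half: if $w$ is an orientation preserving diffeomorphism of $\mathbf{U}$ onto $D$, then $J_w>0$ in $\mathbf{U}$ by Lewy's theorem, and since $w\in C^1(\overline{\mathbf U})$ the Jacobian extends continuously with $J_w\ge 0$ on $\mathbf{T}$; strict positivity on $\mathbf{T}$ is then obtained from the fact that $F$ is an immersion together with a boundary point (Hopf type) argument using that $w$ carries the inward normal into $D$. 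So the real content is sufficiency, which is what I would develop as follows.

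Assume \eqref{use}. The plan is to upgrade the boundary positivity of $J_w$ to positivity throughout $\mathbf{U}$, and then to read off global injectivity by a degree count. First, $J_w>0$ on $\mathbf{T}$ forces $|h'|>|g'|\ge 0$ there, so $h'$ is zero free on $\mathbf{T}$ and the analytic dilatation $a:=g'/h'$ satisfies $|a|<1$ on $\mathbf{T}$. The key step is to show that $h'$ has \emph{no} zeros inside $\mathbf{U}$. I would count them by the argument principle, $\#\{h'=0\}=\tfrac1{2\pi}\,\Delta_{\mathbf T}\arg h'$, and compute the right hand side from the boundary data. Parametrising $\zeta=e^{i\theta}$ and differentiating $w=F$ along $\mathbf{T}$ gives
\[
\partial_\theta w=i\zeta\,h'-i\overline\zeta\,\overline{g'}=i\zeta\,h'\bigl(1-\overline\zeta^{\,2}\,\overline{g'}/h'\bigr),
\]
and since $|\overline\zeta^{\,2}\,\overline{g'}/h'|=|a|<1$ the last factor stays in the right half plane and contributes nothing to the winding. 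Hence $\Delta_{\mathbf T}\arg(\partial_\theta w)=2\pi+\Delta_{\mathbf T}\arg h'$. Because $F$ is an orientation preserving $C^1$ diffeomorphism onto the Jordan curve $\gamma=\partial D$, the field $\partial_\theta w$ is the nonvanishing tangent of $\gamma$, whose total turning is $2\pi$ by the Umlaufsatz (rotation index $+1$). Comparing the two expressions yields $\Delta_{\mathbf T}\arg h'=0$, so $h'$ is zero free in $\mathbf{U}$.

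With $h'\neq0$ in $\mathbf{U}$ the dilatation $a=g'/h'$ is holomorphic in $\mathbf{U}$, continuous on $\overline{\mathbf U}$, and $|a|<1$ on $\mathbf{T}$; the maximum modulus principle then gives $|a|<1$ in $\mathbf{U}$, whence $J_w=|h'|^2(1-|a|^2)>0$ everywhere in $\mathbf{U}$. It remains to pass from local to global invertibility. Since $w|_{\mathbf T}=F$ is an orientation preserving homeomorphism onto $\gamma$, the Brouwer degree $\deg(w,\mathbf{U},p)$, which depends only on the boundary values, equals $1$ for $p\in D$ and $0$ for $p\notin\overline D$. As $J_w>0$ makes $w$ an open, orientation preserving local diffeomorphism, every value is regular and each preimage counts with sign $+1$; thus points with $p\notin\overline D$ have no preimage, openness forces $w(\mathbf{U})\subseteq D$, the degree $1$ gives surjectivity onto $D$, and each $p\in D$ is attained exactly once. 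A bijective local diffeomorphism being a diffeomorphism, this proves $w:\mathbf{U}\to D$ is a diffeomorphism.

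The main obstacle is precisely the interior positivity $J_w>0$ in $\mathbf{U}$: without convexity one cannot invoke the nodal (sign change) count of Radó--Kneser--Choquet, and the device that replaces it is the turning tangent computation above, which converts the boundary hypothesis \eqref{use} into the vanishing of the winding of $w_z=h'$ and hence into the absence of critical points in the interior.
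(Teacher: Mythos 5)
Your proposal cannot be checked against anything inside this paper: Proposition~\ref{ales} is not proved here at all, it is quoted from Alessandrini and Nesi \cite{ale} and used as a black box in the proof of Theorem~\ref{ma}. So what follows compares your argument with what the result actually requires (and with the original source).

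Your proof of \emph{sufficiency} --- the implication $J_w>0$ on $\mathbf{T}$ $\Rightarrow$ $w$ is a diffeomorphism, which is the only direction this paper ever invokes --- is correct and essentially complete. The identity $\partial_\theta w=i\zeta h'\bigl(1-\overline{\zeta}^{\,2}\,\overline{g'}/h'\bigr)$ on $\mathbf{T}$, the observation that the last factor stays in the right half-plane, Hopf's Umlaufsatz giving total turning $2\pi$ for the positively oriented $C^1$ Jordan curve, and the argument principle (legitimate because $h'$ extends continuously to $\overline{\mathbf{U}}$ and is zero-free on $\mathbf{T}$, so its zeros stay in a compact subset of $\mathbf{U}$) force $h'\neq 0$ in $\mathbf{U}$; the maximum modulus principle for $a=g'/h'$ then gives $J_w>0$ throughout $\mathbf{U}$, and the degree count gives global bijectivity. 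This is in substance the index-theoretic route of \cite{ale}, reconstructed correctly.

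The genuine gap is in what you call ``the easy half.'' Your Hopf-type argument for necessity does not survive the absence of convexity, which is precisely the setting of this proposition. The function to which you would apply the boundary point lemma is $\psi(z)=\langle w(z)-F(\zeta_0),\nu\rangle$, with $\nu$ the inner normal of $\gamma$ at $F(\zeta_0)$; Hopf's lemma needs $\psi>0$ near $\zeta_0$ inside $\mathbf{U}$, but when $D$ is not convex, points of $D$ arbitrarily close to $F(\zeta_0)$ lie on the wrong side of the tangent line, so $\psi$ may take negative values there and the lemma does not apply. (When $D$ is convex it does apply --- but then the whole proposition is subsumed by Rad\'o--Kneser--Choquet.) Moreover, no purely local or qualitative argument can close this gap: the harmonic map $w(x+iy)=x+\tfrac{i}{2}(y^2-x^2)$ is a diffeomorphism of the open upper half-plane onto the non-convex domain $\{(u,v):v>-u^2/2\}$, is $C^\infty$ up to the boundary, restricts to the boundary as the injective immersion $x\mapsto x-\tfrac{i}{2}x^2$, and yet its Jacobian $J_w=y$ vanishes identically on the boundary. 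Any correct proof of necessity must therefore use the global hypotheses --- boundedness of $D$ and the fact that $w$ is the Poisson integral of a parametrization of a closed Jordan curve --- as in \cite{ale}; alternatively, if the statement is read for diffeomorphisms of $\overline{\mathbf{U}}$ onto $\overline{D}$, necessity becomes trivial, since a $C^1$ diffeomorphism of the closed disk has nonvanishing Jacobian. As written, your two-sentence dispatch of this direction is not a proof, and the mechanism it appeals to is exactly the one that convexity, and only convexity, would license.
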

%
%
\section{Statement of the main result and preliminaries}

In this paper we generalize Rado-Kneser-Choquet theorem as follows.

\begin{theorem}[The main result]\label{ma} Let $F: \mathbf T\to
\gamma\subset \mathbf C$ be an orientation preserving Lipschitz weak
homeomorphism of the unit circle $\mathbf T$ onto a Dini's smooth Jordan curve and let $w=P[F]$ be its Poisson extension in the unit disk. Let $D$ be a bounded domain such that $\partial
D =\gamma$. Then there is a continuous function $T(\zeta)$ on  $\mathbf T$  such that $$J_w(\zeta)= |\partial_t F(\zeta)| T(\zeta),\ \ \  \text{for a.e.   }\zeta=e^{it}\in \mathbf{T}.$$ If
\begin{equation}\label{use12}
T(\zeta)>0 \ \ \text{  in }   \ \ \ \mathbf{T}
,\end{equation}  then the mapping $w=P[F]$ is a
diffeomorphism of $\mathbf U$ onto $D$.
\end{theorem}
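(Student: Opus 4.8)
The plan is to reduce the theorem to Proposition~\ref{ales} by an approximation argument. The obstacle in applying Proposition~\ref{ales} directly is twofold: first, $F$ is only a Lipschitz \emph{weak} homeomorphism, not a $C^1$ diffeomorphism, so $\partial_t F$ may vanish on a set of positive measure and $w=P[F]$ need not be $C^1$ up to the boundary; second, the hypothesis \eqref{use12} is phrased in terms of the factor $T(\zeta)$ rather than directly as $J_w>0$. Let me address both.

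First I would establish the factorization $J_w(\zeta)=|\partial_t F(\zeta)|\,T(\zeta)$ with $T$ continuous on $\mathbf T$. Writing $w=h+\overline{g}$ with $h,g$ analytic, the boundary Jacobian is $J_w=|h'|^2-|g'|^2$, and one computes the radial limit $J_w(\zeta)=\lim_{r\to1-}J_w(r\zeta)$ using the Poisson--Stieltjes representation. Since $\gamma$ is Dini smooth, the standard Privalov/Warschawski regularity theory gives that $\partial_t F$, the tangential derivative, exists a.e.\ and that the relevant boundary integrals converge to a continuous limit after factoring out the magnitude $|\partial_t F|$; the surviving factor is $T(\zeta)$, essentially measuring the angle between the image tangent direction and the normal component carried by the harmonic extension. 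The Dini condition is exactly what guarantees $T$ extends continuously to all of $\mathbf T$ even where $\partial_t F=0$.

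The core of the proof is the approximation. I would choose a sequence of orientation-preserving $C^1$ diffeomorphisms $F_n:\mathbf T\to\gamma_n$ onto Dini smooth curves $\gamma_n$ with $\gamma_n\to\gamma$, arranged so that $F_n\to F$ in the Lipschitz norm and so that the associated factors satisfy $T_n(\zeta)\to T(\zeta)$ uniformly on $\mathbf T$. Because $T>0$ on the compact set $\mathbf T$ by \eqref{use12}, there is $\delta>0$ with $T\ge\delta$, so for large $n$ we get $T_n\ge\delta/2$, whence $J_{w_n}=|\partial_t F_n|\,T_n>0$ everywhere on $\mathbf T$ (using that $F_n$ is a genuine diffeomorphism so $\partial_t F_n\neq0$). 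Proposition~\ref{ales} then applies to each $w_n=P[F_n]$, giving that $w_n$ is a diffeomorphism of $\mathbf U$ onto the domain $D_n$ bounded by $\gamma_n$. The Poisson extension depends continuously on the boundary data, so $w_n\to w$ locally uniformly in $\mathbf U$ together with first derivatives, and one checks $J_{w_n}\to J_w$ locally uniformly with $J_{w_n}>0$; by Hurwitz-type reasoning for harmonic maps the limit $w$ is either a homeomorphism or degenerate, and the boundary condition rules out degeneracy, yielding that $w$ is a diffeomorphism onto $D$.

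The main obstacle I anticipate is the construction of the approximating sequence $F_n$ with simultaneous control of both the Lipschitz convergence $F_n\to F$ and the uniform convergence $T_n\to T$ of the Jacobian factors. Since $F$ is only a weak homeomorphism, $\partial_t F$ can vanish on large sets, and one must regularize $F$ without destroying the sign information encoded in $T$; the delicate point is ensuring $T_n$ stays uniformly bounded below, which requires that the mollification respect the Dini modulus of continuity of the tangent direction of $\gamma$. Once the approximation is set up cleanly, the passage to the limit via Proposition~\ref{ales} and the stability of diffeomorphisms under locally uniform $C^1$ convergence with positive Jacobian is comparatively routine.
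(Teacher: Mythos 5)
Your overall strategy (mollify the boundary data, apply Proposition~\ref{ales} to the mollified Poisson extensions, pass to the limit by a Hengartner--Schober argument) is the same as the paper's, but there is a genuine gap at exactly the point you flag as ``the main obstacle'': the uniform convergence $T_n\to T$ is not something you can simply ``arrange'', and the mechanism you propose for it cannot work. You ask for diffeomorphisms $F_n\to F$ \emph{in the Lipschitz norm}. Writing $F=g\circ f$ as in the paper, this means $f_n'\to f'$ in $L^\infty(\mathbf T)$, i.e.\ uniform convergence off a null set; a uniform limit of continuous functions on a full-measure set $E$ is continuous on $E$. But the theorem covers weak homeomorphisms such as $f$ with $f'=c\chi_A$ a.e., where $A\subset[0,2\pi]$ is a fat Cantor set and $c=2\pi/|A|$: since $A$ is nowhere dense, every point of $A\cap E$ is a limit of points of $A^c\cap E$, so no a.e.-representative of $\chi_A$ is continuous on any full-measure set, and hence \emph{no} sequence of diffeomorphisms converges to such an $F$ in Lipschitz norm. (Taking $\gamma$ to be the unit circle, this $F$ even satisfies all hypotheses of the theorem, by Remark~\ref{rep}.) So your approximation scheme is vacuous precisely for the data the theorem is designed to handle. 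Under the charitable reading that you only meant uniform convergence with uniform Lipschitz bounds, the gap remains: you then have no argument at all that $T_n\to T$ uniformly, and that implication is the entire technical content of the result. The same hand-waving affects the first part of the statement: the existence and continuity of $T$ is something you must prove, not quote from ``Privalov/Warschawski theory''.

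For comparison, the paper keeps the curve $\gamma$ \emph{fixed} (there is no need for approximating curves $\gamma_n$; the curve is already Dini smooth, only the parametrization is degenerate) and sets $F_n=g\circ f_n$, where the $f_n$ are $L$-Lipschitz diffeomorphisms converging to $f$ merely in sup norm (Lemma~\ref{dri}) --- a convergence that, unlike yours, is always achievable. The heart of the proof is then showing that this weak convergence of parametrizations still forces uniform convergence of the singular integrals $T[f_n]$ of \eqref{Ff} to $T[f]$: one proves uniform boundedness and equicontinuity of the family $\{T[f_n]\}$, where the Dini estimate \eqref{oh} combined with the identity \eqref{goodd} produces an integrable dominant $Q(t)$ for the kernels, so that dominated convergence applies; Arzel\`a--Ascoli then yields $\|T[f_n]-T[f]\|\to 0$, and continuity of $T[f]$ comes for free as a uniform limit of continuous functions. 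Only after this does the paper run the step you treat as routine: $T[f]\ge\epsilon>0$ gives $T[f_n]\ge\epsilon/2$, hence $J_{w_n}=f_n'\,T[f_n]>0$ on $\mathbf T$, Proposition~\ref{ales} makes each $w_n$ a diffeomorphism onto the \emph{same} domain $D$ (another benefit of not moving $\gamma$), and Hengartner--Schober plus surjectivity of $F$ gives univalence of $w=P[F]$.
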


 In order to compare this statement with Kneser's
Theorem, it is worth noticing that, when $D$ is convex, then by
Remark~\ref{rep} the condition \eqref{use12} is automatically
satisfied. In a paper of the author in \cite{studia} the author proved Theorem~\ref{ma} under a stronger condition that $\gamma\in C^{1,\alpha}$ for some $\alpha>0$ and by using a different approach.

Note that we do not have any restriction on convexity of image
domain in Theorem~\ref{ma} which is  proved in section~3.

\subsection*{A conjecture} Motivated by Theorem~\ref{ma} we state the following conjecture. Let $F: \mathbf T\to \gamma\subset \mathbf C$ be a homeomorphism, where $\gamma$ is a rectifiable Jordan curve. Let $D$ be the
bounded domain such that $\partial D =\gamma$. The mapping $w=P[F]$
is a diffeomorphism of $\mathbf U$ onto $D$ if and only if
\begin{equation}\label{use15}
\mathrm{ess\,inf}\{J_w(\zeta):\zeta \in\mathbf{T}\}\ge 0.\end{equation}

Before proving results we overview the involved concepts and
make the main definitions concerning this paper.
\subsection{Smooth Jordan curves and their parameterizations} Suppose  that $\gamma$ is a
rectifiable curve in the complex plane. Denote  by $l$ the length of
$\gamma$ and let $g:[0,l]\mapsto \gamma$ be the arc length
parameterization of $\gamma$, i.e. the parameterization satisfying
the condition:
$| g'(s)|=1 \text{ for all $s\in [0,l]$}.$

Let
\begin{equation}\label{ker}K(s,t)=\text{Re}\,[\overline{(g(t)-g(s))}\cdot i
g'(s)]\end{equation} be a function defined on $[0,l]\times[0,l]$. By
$K(s\pm l, t\pm l)=K(s,t)$ we extend it on $\mathbf R\times \mathbf
R$. Note that $ig'(s)$ is the inner unit normal vector of $\gamma$
at $g(s)$ and therefore, if $\gamma$ is convex then
\begin{equation}\label{convexkernel}K(s,t)\ge 0\text{ for every $s$
and $t$}.\end{equation} Suppose now that $F:\mathbf R\mapsto \gamma$
is an arbitrary $2\pi$ periodic Lipschitz function such that
$F|_{[0,2\pi)}:[0,2\pi)\mapsto \gamma$ is an orientation preserving
bijective function. { Then there exists an increasing continuous
function $f:[0,2\pi]\mapsto [0,l]$ such that
\begin{equation}\label{fgs}F(\tau)=g(f(\tau)).\end{equation}}
In the remainder of this paper we will identify $[0,2\pi)$ with the
unit circle $\mathbf T$,  and $F(s)$ with $ F(e^{is})$. In view of
the previous convention we have
$$F'(\tau)=g'(f(\tau))\cdot f'(\tau),$$ and therefore
$$|F'(\tau)|=|g'(f(\tau))|\cdot |f'(\tau)|=f'(\tau).$$
Along with the function $K$ we will also consider the function $K_F$
defined by
$$K_F(t,\tau)=\text{Re}\,[\overline{(F(t)-F(\tau))}\cdot
iF'(\tau)].$$ It is easy to see that
\begin{equation}\label{kg}K_F(t,\tau)=f'(\tau)K(f(t), f(\tau)).
\end{equation}

\begin{definition} Let $l=|\gamma|$.
We will say that a surjective function $F=g\circ f:\mathbf T\to
\gamma$ is a weak homeomorphism, if $f:[0,2\pi]\to[0,l]$ is a
nondecreasing surjective function.
\end{definition}

\begin{definition}
Let $f:[a,b]\to \mathbf C$ be a continuous function. The modulus of
continuity of $f$ is
   $$\omega(t)= \omega_f(t) = \sup_{|x-y|\le t} |f(x)-f(y)|. \,$$
The function $f$ is called Dini-continuous if

\begin{equation}\label{didi}\int_{0^+} \frac{\omega_f(t)}{t}\,dt < \infty.\end{equation} Here
$\int_{0^+}:=\int_{0}^k$ for some positive constant $k$.  A $C^1$
Jordan curve $\gamma$ with the length $l=|\gamma|$, is said to be
Dini smooth if $g'$ is Dini continuous. We say that  $\gamma$ is of class $C^{1,\mu}$, $0<\mu\le 1$,
if $g$  is of class   $C^1$    and
$$\sup_{t\neq s}\frac{|g'(t)-g'(s)|}{|t-s|^{\mu}}<\infty.$$ Observe that every smooth
$C^{1,\mu}$ Jordan curve is Dini smooth.
\end{definition}

\begin{lemma} \cite{studia}
If $\gamma$ is Dini smooth, and $\omega$ is modulus of continuity of
$g'$, then
\begin{equation}\label{oh}|K(s,t)|\le
\int_0^{\min\{|s-t|,l-|s-t|\}}\omega(\tau)d\tau.\end{equation}
\end{lemma}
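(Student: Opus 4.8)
The plan is to turn the kernel $K(s,t)$ into an integral of $g'$ and then exploit the arc-length normalization $|g'|\equiv 1$ to subtract a term that vanishes identically, leaving an integral of the increment $g'(u)-g'(s)$, which is exactly what the modulus of continuity $\omega$ controls.

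First I would write $g(t)-g(s)=\int_s^t g'(u)\,du$ and insert this into the definition \eqref{ker}, obtaining
\[
K(s,t)=\int_s^t \text{Re}\big[\overline{g'(u)}\cdot i\,g'(s)\big]\,du.
\]
The key observation is that $\text{Re}[\overline{g'(s)}\cdot i\,g'(s)]=\text{Re}[\,i\,|g'(s)|^2\,]=0$, since $|g'(s)|=1$. Subtracting this identically zero quantity under the integral sign gives
\[
K(s,t)=\int_s^t \text{Re}\big[\big(\overline{g'(u)-g'(s)}\big)\cdot i\,g'(s)\big]\,du ,
\]
and hence, using $|\text{Re}\,z|\le |z|$ and $|g'(s)|=1$ together with the definition of $\omega$,
\[
|K(s,t)|\le \int_s^t |g'(u)-g'(s)|\,du\le \int_0^{|t-s|}\omega(\tau)\,d\tau
\]
after the substitution $\tau=u-s$ (taking $s\le t$; the other case is symmetric). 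This already furnishes one of the two competing bounds.

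To obtain the second bound $\int_0^{l-|t-s|}\omega(\tau)\,d\tau$ I would use that $\gamma$ is closed, so $g$ and $g'$ are $l$-periodic. This allows me to traverse $\gamma$ the other way, writing $g(t)-g(s)=-\int_t^{s+l}g'(u)\,du$. Repeating the same subtraction trick, then substituting $v=s+l-u$ and using $g'(s+l-v)=g'(s-v)$, converts the estimate into $\int_0^{l-|t-s|}|g'(s-v)-g'(s)|\,dv\le \int_0^{l-|t-s|}\omega(\tau)\,d\tau$. Taking the smaller of the two bounds, and noting that $\tau\mapsto\int_0^\tau\omega$ is nondecreasing because $\omega\ge 0$, yields precisely \eqref{oh}.

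The routine parts are the integral representation and the elementary identity $\text{Re}[\overline{a}\cdot i\,a]=0$; I expect the only real care to be needed in handling orientation and the periodic identification when going around $\gamma$ the long way, so that the arguments of $\omega$ come out as the shorter-arc distances and the two directions of integration match the two entries of $\min\{|s-t|,\,l-|s-t|\}$.
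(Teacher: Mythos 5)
Your proof is correct: writing $g(t)-g(s)=\int_s^t g'(u)\,du$, using the cancellation $\mathrm{Re}[\overline{g'(s)}\cdot i g'(s)]=0$ to reduce to increments $g'(u)-g'(s)$, and then running the same estimate along the complementary arc via $l$-periodicity of $g'$ gives exactly the two bounds whose minimum is \eqref{oh}. The paper itself offers no proof here (it cites \cite{studia}), and your argument is essentially the standard one given in that source, so the approaches coincide.
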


A function $\varphi:A\to B$ is called  $(\ell,\mathcal{L)}$
bi-Lipschitz, where $0<\ell<\mathcal{L}<\infty$, if $\ell|x-y|\le
|\varphi(x)-\varphi(y)|\le\mathcal{L} |x-y|$ for $x,y\in A$.

 \subsection{Harmonic mappings} A mapping
$w$ is called \emph{harmonic} in a region $D$ if $w=u+iv$ where $u$
and $v$ are harmonic functions in $D$. If $D$ is simply-connected,
then there are two analytic functions $g$ and $h$ defined on $D$
such that
$$w=g+\overline h.$$
Let $$P(r,t)=\frac{1-r^2}{2\pi (1-2r\cos t+r^2)}$$ denote the
Poisson kernel. The Poisson integral of $F\in L^1(\mathbf T)$ is a
harmonic function given by
\begin{equation}\label{e:POISSON}
w(z)=P[F](z)=\int_0^{2\pi}P(r,t-\tau)F(e^{it})dt,
\end{equation}
where $z=re^{i\tau}\in \mathbf U$.

\section{The proof of the main theorem}

The aim of this chapter is to prove Theorem~\ref{ma}. As in \cite{studia}, we will
construct a suitable sequence $w_n$ of univalent harmonic mappings,
converging almost uniformly to $w=P[F]$.  In order to do so, we will
mollify the boundary function $F$, by a sequence of diffeomorphism
$F_n$ and take the Poisson extension $w_n=P[F_n]$. We will show
that, under the condition of Theorem~\ref{ma} for large $n$, $w_n$
satisfies the conditions of theorem of Alessandrini and Nesi. By a
result of Hengartner and Schober \cite{hsc}, the limit function $w$
of a locally uniformly convergent sequence of univalent harmonic
mappings $w_n$ is univalent, providing that $F$ is a surjective
mapping.

We begin by the following lemmata.
\begin{lemma}\label{dri}(See e.g. \cite{studia}).
If $\varphi:\mathbf R\to \mathbf R$ is a ${L}-$ Lipschitz weak homeomorphism,
such that $\varphi(x+a) = \varphi(x) + b$ for some $a$ and $b$ and
every $x$, then there exist a sequence of ${L}-$ Lipschitz
diffeomorphisms) $\varphi_n:\mathbf R\to \mathbf R$ such that
$\varphi_n$ converges uniformly to $\varphi$, and $\varphi_n(x+a) =
\varphi_n(x)+b$.
\end{lemma}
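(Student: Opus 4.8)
The plan is to combine two standard operations. First I would smooth $\varphi$ by convolution to make it $C^\infty$ while keeping it nondecreasing, $L$-Lipschitz, and periodic in the sense $\varphi(x+a)=\varphi(x)+b$. These smoothed functions need not be strictly increasing, so secondly I would add a small strictly increasing affine correction to turn the merely nondecreasing approximants into genuine diffeomorphisms. The periodicity relation forces the correction to be a specific linear map, and the whole point is that this map is compatible with the $L$-Lipschitz constraint.

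Before constructing anything I would record some structural facts. Since $\varphi$ is nondecreasing, surjective and satisfies $\varphi(x+a)=\varphi(x)+b$, we have $a,b>0$; integrating the a.e.\ bound $\varphi'\le L$ over one period gives $b=\varphi(x+a)-\varphi(x)=\int_x^{x+a}\varphi'\le La$, so the mean slope satisfies $b/a\le L$. Moreover the function $h(x):=\varphi(x)-\tfrac{b}{a}x$ is $a$-periodic, because $h(x+a)=\varphi(x)+b-\tfrac{b}{a}(x+a)=h(x)$; being continuous and periodic it is bounded, say $\abs{h}\le M$. Now let $\rho_{\eps}$ be a nonnegative $C^\infty$ mollifier supported in $[-\eps,\eps]$ with $\int\rho_\eps=1$, and set $\psi_\eps:=\varphi*\rho_\eps$. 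Then $\psi_\eps\in C^\infty$, and since $\psi_\eps'(x)=\int\varphi'(x-t)\rho_\eps(t)\,dt$ with $0\le\varphi'\le L$ a.e., we obtain $0\le\psi_\eps'\le L$, so $\psi_\eps$ is nondecreasing and $L$-Lipschitz. Periodicity is inherited, $\psi_\eps(x+a)=\int\varphi(x+a-t)\rho_\eps(t)\,dt=\psi_\eps(x)+b$, and the $L$-Lipschitz bound gives $\abs{\psi_\eps(x)-\varphi(x)}\le\int\abs{\varphi(x-t)-\varphi(x)}\rho_\eps(t)\,dt\le L\eps$, so $\psi_\eps\to\varphi$ uniformly.

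To repair the monotonicity defect (a flat stretch of $\varphi$ longer than $2\eps$ survives in $\psi_\eps$), I would set
\[
\varphi_n(x):=\Bigl(1-\tfrac1n\Bigr)\psi_{1/n}(x)+\tfrac1n\cdot\tfrac{b}{a}\,x .
\]
The added term is the unique affine map with the correct period, so $\varphi_n(x+a)=\varphi_n(x)+b$. Its derivative obeys $\varphi_n'(x)=(1-\tfrac1n)\psi_{1/n}'(x)+\tfrac1n\tfrac{b}{a}\ge\tfrac1n\tfrac{b}{a}>0$, so $\varphi_n$ is a strictly increasing $C^\infty$ map, i.e.\ a diffeomorphism of $\mathbf R$ onto $\mathbf R$. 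Since both $\psi_{1/n}'\le L$ and $b/a\le L$, a convex combination gives $\varphi_n'\le L$, so each $\varphi_n$ is $L$-Lipschitz. Uniform convergence follows from
\[
\abs{\varphi_n(x)-\varphi(x)}\le\Bigl(1-\tfrac1n\Bigr)\abs{\psi_{1/n}(x)-\varphi(x)}+\tfrac1n\Bigl\lvert\tfrac{b}{a}x-\varphi(x)\Bigr\rvert\le\tfrac{L}{n}+\tfrac{M}{n},
\]
where the last estimate uses $\abs{\tfrac{b}{a}x-\varphi(x)}=\abs{h(x)}\le M$.

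The only genuinely delicate point is the interplay between the three requirements: strict monotonicity, the $L$-Lipschitz bound, and uniform convergence. Strict monotonicity forces a positive additive correction, which a priori threatens the Lipschitz constant; the key is that periodicity dictates the correction be the affine map $\tfrac{b}{a}x$, whose slope $b/a$ is automatically $\le L$, so the convex combination stays $L$-Lipschitz. That the correction does not spoil uniform convergence rests on the observation that $\varphi-\tfrac{b}{a}x$ is periodic, hence bounded; this is what keeps the error term $\tfrac1n\abs{h(x)}$ uniformly small despite $\varphi$ and the linear map both being unbounded.
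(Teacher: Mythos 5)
The paper itself contains no proof of Lemma~\ref{dri}: it is quoted verbatim from \cite{studia} (``See e.g.''), so there is no internal argument to measure your attempt against. Judged on its own terms, your proof is correct and complete. The two operations are the right ones, and you handle the only genuinely delicate interaction correctly: mollification preserves all three constraints (monotonicity, the bound $0\le\psi_\eps'\le L$, and the quasi-periodicity $\psi_\eps(x+a)=\psi_\eps(x)+b$), while the convex combination $(1-\tfrac1n)\psi_{1/n}+\tfrac1n\cdot\tfrac{b}{a}x$ restores \emph{strict} monotonicity without inflating the Lipschitz constant, precisely because periodicity forces the mean slope bound $b/a\le L$. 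Note that the naive repair $\psi_{1/n}(x)+\tfrac1n x$ would only yield $(L+\tfrac1n)$-Lipschitz maps (and would also break the relation $\varphi_n(x+a)=\varphi_n(x)+b$), so identifying $\tfrac{b}{a}x$ as the unique admissible correction and combining convexly rather than additively is exactly the point of the lemma; your closing paragraph shows you see this. The convergence estimate, using that $\varphi(x)-\tfrac{b}{a}x$ is periodic hence bounded, is also right.

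One small point deserves a word: you assert $a,b>0$ from monotonicity and surjectivity, but the hypothesis only says ``for some $a$ and $b$.'' If $a<0$ replace $(a,b)$ by $(-a,-b)$; then $b\ge 0$ since $\varphi$ is nondecreasing, and $b=0$ is impossible because a periodic nondecreasing function is constant, contradicting surjectivity. The degenerate case $a=0$ (which forces $b=0$ and makes the relation vacuous) is excluded by the intended reading of the hypothesis and by the application in the paper, where $a=b=2\pi$. This is a one-line repair, not a gap. You should also note explicitly that $\varphi_n$ maps $\mathbf R$ \emph{onto} $\mathbf R$ (immediate from $\varphi_n(x+ka)=\varphi_n(x)+kb$, $k\in\mathbf Z$), so that $\varphi_n'>0$ indeed gives a diffeomorphism of the whole line.
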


\begin{lemma}\cite{studia}
If $\omega:[0,l]\to \mathbf R_+$ is a bounded function satisfying
\eqref{didi}, then for every constant $a$, $\omega(ax)$
satisfies \eqref{didi}. Next for every $0<y\le l$ there hold the
following formula:
\begin{equation}\label{goodd}\int_{0+}^y\frac{1}{x^2}\int_0^x\omega(a t) dt dx =
\int_{0+}^y\left[\frac{\omega(ax)}{x}-\frac{\omega(ax)}{y}\right]dx.\end{equation}
\end{lemma}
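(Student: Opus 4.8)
The plan is to dispatch the two assertions separately. Throughout I take $a>0$, which is the only case in which $\omega(ax)$ is even defined for $x$ near $0$. The scaling invariance of \eqref{didi} is immediate from a change of variables, while the identity \eqref{goodd} is an application of Fubini's theorem (equivalently, an integration by parts) whose only delicate point is the vanishing of a boundary term at $0^{+}$.

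For the first claim I substitute $s=ax$ in the Dini integral:
$$\int_{0}^{k}\frac{\omega(ax)}{x}\,dx=\int_{0}^{ak}\frac{\omega(s)}{s}\,ds,$$
and the right-hand side is finite: near $0$ it is controlled by \eqref{didi}, and on the remaining compact part of $[0,ak]$ the integrand $\omega(s)/s$ is bounded, since $\omega$ is bounded and $s$ is bounded away from $0$. Hence $\omega(a\,\cdot)$ satisfies \eqref{didi}.

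For the identity, put $\Phi(x)=\int_{0}^{x}\omega(at)\,dt$; since $\omega$ is bounded, $\Phi$ is Lipschitz and $\Phi(0)=0$. Cutting off near the origin, I would evaluate $\int_{\eps}^{y}\Phi(x)/x^{2}\,dx$ and let $\eps\to0^{+}$. The integrand is nonnegative, so Fubini applies on the region $\{\eps\le x\le y,\ 0\le t\le x\}$, and integrating first in $x$ (using $\int_{\max(\eps,t)}^{y}x^{-2}\,dx=\frac{1}{\max(\eps,t)}-\frac1y$) gives
$$\int_{\eps}^{y}\frac{\Phi(x)}{x^{2}}\,dx=\Big(\frac1\eps-\frac1y\Big)\int_{0}^{\eps}\omega(at)\,dt+\int_{\eps}^{y}\Big(\frac{\omega(at)}{t}-\frac{\omega(at)}{y}\Big)\,dt.$$
As $\eps\to0^{+}$ the second term increases to the right-hand side of \eqref{goodd} by monotone convergence, so everything reduces to showing that the first term tends to $0$.

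This boundary term is the main, and essentially the only, obstacle, and it is precisely where Dini continuity enters. After the substitution $s=at$ and using $s\le a\eps$ on the domain of integration,
$$\frac1\eps\int_{0}^{\eps}\omega(at)\,dt=\frac{1}{a\eps}\int_{0}^{a\eps}\omega(s)\,ds=\frac{1}{a\eps}\int_{0}^{a\eps}s\,\frac{\omega(s)}{s}\,ds\le\int_{0}^{a\eps}\frac{\omega(s)}{s}\,ds\longrightarrow 0,$$
as $\eps\to0^{+}$, the limit being $0$ as the tail of the convergent integral \eqref{didi}. Together with $\frac1y\int_{0}^{\eps}\omega(at)\,dt\to0$, which is trivial since $\omega$ is bounded, this shows the first term vanishes, and \eqref{goodd} follows.
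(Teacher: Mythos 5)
Your proof is correct. Note that this paper does not actually prove the lemma --- it is quoted from the author's earlier work \cite{studia} --- so there is no internal proof to compare against; your argument (Tonelli/Fubini on the triangle $\{0\le t\le x\le y\}$, which is the same computation as integrating $\Phi(x)/x^2$ by parts) is the natural one, and you correctly identified and handled the only delicate point, namely that the boundary term $\frac{1}{\eps}\int_0^{\eps}\omega(at)\,dt$ vanishes as $\eps\to 0^+$ precisely because of the Dini condition (via the bound $\frac{1}{a\eps}\int_0^{a\eps}\omega(s)\,ds\le\int_0^{a\eps}\frac{\omega(s)}{s}\,ds$), rather than from boundedness of $\omega$ alone.
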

\begin{lemma}\label{form}\cite{studia}
If $w=P[F]$ is a harmonic mapping, such that $F=g\circ f$, where $g$
and $f$ are as mentioned earlier, is a Lipschitz weak homeomorphism
from the unit circle onto a Dini smooth Jordan curve, then for
almost every $\tau\in [0,2\pi]$
$$J_w(e^{i\tau}) :=\lim_{r\to 1} J_w(re^{i\tau})$$ and there hold the
formula

\begin{equation}\label{jacfor}\begin{split}J_w(e^{i\tau})&= f'(\tau)\int_0^{2\pi}
\frac{\mathrm{Re}\,[\overline{(g(f(t))-g(f(\tau)))}\cdot i
g'(f(\tau))]}{2\sin^2\frac{t-\tau}{2}}dt.\end{split}\end{equation}
\end{lemma}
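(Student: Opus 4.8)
The plan is to read the Jacobian off the boundary by treating its tangential and radial constituents separately. Writing $z=re^{i\tau}$ and $w=u+iv$ and passing to polar coordinates $x=r\cos\tau$, $y=r\sin\tau$, the chain rule gives the identity
\[
J_w(re^{i\tau})=\frac1r\,\mathrm{Im}\bigl(\overline{w_r}\,w_\tau\bigr),\qquad w_r=\partial_r w,\ \ w_\tau=\partial_\tau w .
\]
Hence it suffices to prove that the two radial limits $\lim_{r\to1}w_\tau(re^{i\tau})$ and $\lim_{r\to1}w_r(re^{i\tau})$ exist for almost every $\tau$ and to identify them; the existence of $J_w(e^{i\tau})=\lim_{r\to1}J_w(re^{i\tau})$ and the formula \eqref{jacfor} then follow by multiplying the two factors (note $\tfrac1r\to1$).

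For the tangential factor I differentiate \eqref{e:POISSON} under the integral sign and use $\partial_\tau P(r,t-\tau)=-\partial_t P(r,t-\tau)$. Since $F$ is Lipschitz and $2\pi$-periodic, an integration by parts in $t$ (the boundary terms cancel by periodicity) gives $w_\tau(re^{i\tau})=P[\partial_t F](re^{i\tau})$. As $\partial_t F\in L^\infty(\mathbf T)$, its Poisson integral converges to the boundary value at every Lebesgue point, so by \eqref{fgs}
\[
\lim_{r\to1}w_\tau(re^{i\tau})=F'(\tau)=g'(f(\tau))\,f'(\tau)\qquad\text{for a.e. }\tau .
\]

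The radial factor is the crux. A direct computation of the radial derivative of the Poisson kernel shows that, for each fixed $s\ne0$, $\partial_r P(r,s)$ converges as $r\to1$ to a negative constant multiple of $\sin^{-2}(s/2)$, together with the uniform bound $|\partial_r P(r,s)|\le C\,s^{-2}$ valid for all $r$ close to $1$ and $|s|\le\pi$. The limiting kernel is not integrable, so $w_r=\int_0^{2\pi}\partial_r P(r,t-\tau)F(e^{it})\,dt$ must be regularised. The key observation is that, once $w_\tau$ has been replaced by $F'(\tau)=g'(f(\tau))f'(\tau)$, only the combination $f'(\tau)\,\mathrm{Im}\bigl(\overline{w_r}\,g'(f(\tau))\bigr)$ enters the Jacobian. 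Because $\partial_r P$ is real and $\int_0^{2\pi}\partial_r P(r,t-\tau)\,dt=0$ for every $r<1$, I may subtract the constant $F(e^{i\tau})=g(f(\tau))$ without changing the integral; invoking the elementary identity $\mathrm{Im}(\overline{\zeta}\,\eta)=-\mathrm{Re}(\overline{\zeta}\,i\eta)$ and the definition \eqref{ker} of $K$, this yields the exact formula
\[
\mathrm{Im}\bigl(\overline{w_r}\,g'(f(\tau))\bigr)=-\int_0^{2\pi}\partial_r P(r,t-\tau)\,K\bigl(f(\tau),f(t)\bigr)\,dt ,\qquad r<1 .
\]

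It remains to let $r\to1$ in this last integral, and this is exactly where Dini smoothness is indispensable: the main obstacle is to produce a single integrable majorant independent of $r$. Combining the geometric estimate \eqref{oh} with the Lipschitz bound $|f(t)-f(\tau)|\le L|t-\tau|$ gives $|K(f(\tau),f(t))|\le\int_0^{L|t-\tau|}\omega(u)\,du$, so the integrand is dominated by $C\,|t-\tau|^{-2}\int_0^{L|t-\tau|}\omega$, which is integrable near $t=\tau$ precisely by the Dini condition \eqref{didi}, as made quantitative by \eqref{goodd}. Dominated convergence then lets me replace $\partial_r P(r,\cdot)$ by its pointwise limit; multiplying by $f'(\tau)$ and recalling $K(f(\tau),f(t))=\mathrm{Re}[\overline{(g(f(t))-g(f(\tau)))}\,i\,g'(f(\tau))]$ produces \eqref{jacfor}. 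It is worth stressing that mere continuity of $g'$ would not suffice here: without the integrability \eqref{didi} the singular integral may diverge, so Dini smoothness enters in an essential way rather than for convenience.
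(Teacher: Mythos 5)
Your overall scheme --- the polar identity $J_w=r^{-1}\mathrm{Im}(\overline{w_r}\,w_\tau)$, the integration by parts giving $w_\tau=P[\partial_tF]$ with a.e.\ radial limit $F'(\tau)=g'(f(\tau))f'(\tau)$, the subtraction of $g(f(\tau))$ using $\int_0^{2\pi}\partial_rP(r,t-\tau)\,dt=0$, and the Dini-majorant/dominated-convergence identification of $\lim_{r\to1}\mathrm{Im}\bigl(\overline{w_r}\,g'(f(\tau))\bigr)$ with the singular integral --- is sound, and it is essentially the argument behind this lemma (the present paper does not reprove it; it cites \cite{studia}). But there is one genuine gap, located exactly at the sentence ``once $w_\tau$ has been replaced by $F'(\tau)$, only the combination $f'(\tau)\,\mathrm{Im}(\overline{w_r}\,g'(f(\tau)))$ enters the Jacobian.'' Writing $w_\tau(re^{i\tau})=F'(\tau)+\varepsilon(r)$ with $\varepsilon(r)\to0$ a.e., the term you discard is $\mathrm{Im}(\overline{w_r}\,\varepsilon(r))$, and you have established no control whatsoever on $|w_r|$: your kernel computation bounds only the \emph{normal} component $\mathrm{Im}(\overline{w_r}\,g'(f(\tau)))$, while the \emph{tangential} component $\mathrm{Re}(\overline{w_r}\,g'(f(\tau)))$ is a conjugate-function type quantity which, for boundary data that are merely Lipschitz, need not stay bounded as $r\to1$ (conjugation does not preserve the Lipschitz class; the generic growth is $\log\frac{1}{1-r}$). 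Since Fatou's theorem gives no rate for $\varepsilon(r)\to0$ at an a.e.\ point, a product of the form (possibly unbounded)$\times o(1)$ cannot be dismissed; and this term is precisely what decides whether $\lim_{r\to1}J_w(re^{i\tau})$ exists at all, which is half of the assertion of the lemma.

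The standard repair is to prove that $w_r$ itself has radial limits a.e. Write $w=h+\overline{k}$ with $h,k$ analytic; then $rw_r-iw_\tau=2zh'(z)$ is analytic and $rw_r+iw_\tau=2\overline{zk'(z)}$ is anti-analytic. Since $w_\tau=P[\partial_tF]$ is a bounded harmonic function, $\partial_tF\in L^\infty(\mathbf T)\subset L^p(\mathbf T)$ for all $p<\infty$, so by the M.~Riesz theorem $zh'$ and $zk'$ lie in the Hardy space $H^p$ for every finite $p$, hence have finite non-tangential limits a.e. Therefore $w_r$ has a radial limit at a.e.\ $\tau$, and at every such $\tau$ your two computations combine legitimately: $\lim_{r\to1}J_w=\mathrm{Im}\bigl(\overline{\lim_{r\to1}w_r}\cdot F'(\tau)\bigr)=f'(\tau)\lim_{r\to1}\mathrm{Im}\bigl(\overline{w_r}\,g'(f(\tau))\bigr)$, which your dominated-convergence step (this is where Dini smoothness is genuinely needed, as you correctly emphasize) identifies with the integral. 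A last cosmetic point: the limit of $\partial_rP(r,s)$ is $-\frac{1}{4\pi}\sin^{-2}\frac{s}{2}$ with the paper's normalization of $P$, so your argument yields \eqref{jacfor} up to a constant factor $\frac{1}{2\pi}$; this is harmless for the way the lemma is used (only the sign of $J_w$ matters), but worth stating explicitly.
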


For a Lipschitz non-decreasing function $f$ and an arc-length
parametrization $g$ of the Dini's smooth curve $\gamma$ we define
the operator $T$ as follows
\begin{equation}\label{Ff}T[f](\tau)=\int_0^{2\pi}
\frac{\mathrm{Re}\,[\overline{(g(f(t))-g(f(\tau)))}\cdot i
g'(f(\tau)))]}{2\sin^2\frac{t-\tau}{2}}dt, \tau\in
[0,2\pi].\end{equation} According to Lemma~\ref{form}, this integral
converges.
\begin{remark}\label{rep}
Notice that, if $\gamma$ is a convex Jordan curve then
$\mathrm{Re}\,[\overline{(g(f(t))-g(f(\tau)))}\cdot i
g'(f(\tau))]\ge 0$, and therefore $T[f]>0$. In the next proof, we
will show that $T[f]$ is continuous if $f$ is a Lipschitz weak homeomorphism  and under the integral condition $T[f]>0$ the harmonic
extension of a bi-Lipschitz mapping is a diffeomorphism regardless
of the condition of convexity.

\end{remark}

\begin{proof}[Proof of Theorem~\ref{ma}]
Assume for simplicity that $|\gamma|= 2\pi$. The general case
follows by normalization. Let $g:[0,2\pi]\to \gamma$ be na arc length
parametrization of $\gamma$. Then $F(e^{it})=g(f(t))$, where
$f:\mathbf R\to \mathbf R$ is a $L-$Lipschitz weak homeomorphism such
that $f(t+2\pi) = f(t) + 2\pi$.  By using Lemma~\ref{dri}, we can choose a family
of $L-$Lipschitz diffeomorphisms $f_n$ converging uniformly to $f$. Then
\begin{equation}\label{under}T[f_n]=\int_0^{2\pi}
\frac{\mathrm{Re}\,[\overline{(g(f_n(t))-g(f_n(\tau)))}\cdot i
g'(f_n(\tau)))]}{2\sin^2\frac{t-\tau}{2}}dt, \tau\in
[0,2\pi].\end{equation} We are going to show that $T[f_n]$ converges
uniformly to $T[f]$. In order to do this we apply Arzela-Ascoli
theorem. Let $\beta:[0,2\pi]\to R$ be a function such that $g'(s)=e^{i\beta(s)}$. 

As in \cite{studia} we obtain $$T[f](\tau)=\int_{-\pi}^{\pi}f'(t+\tau)\cdot
\sin[\beta(f(t+\tau))-\beta(f(\tau))]\cot \frac t2
\frac{dt}{2\pi}.$$ Then \[\begin{split}|T[f_n]|&\le
\frac{1}{\pi}\|f_n'\|\int_{0}^{\pi}\omega_\beta(\|f_n'\|
t)\cot \frac t2 dt
\\&\le \frac{1}{\pi}\|f'\|\int_{0}^{\pi}\omega_\beta(\|f'\| t)\cot \frac t2 dt
= C(f,\gamma)<\infty.\end{split}\] We prove now that $T[f_n]$ is
equicontinuous family of functions. In the contrary, there is $x_\circ\in[0,2\pi]$ and $\epsilon_\circ>0$ and there is a sequence of non-decreasing natural numbers  $n_k$ and a sequence of real numbers $\tau_k$ tending to zero such that for every $k\in\mathbf{N}$ we have \begin{equation}\label{epsilon}|T[f_{n_k}](x_\circ+\tau_k)-T[f_{n_k}](x_\circ)|\ge \epsilon_\circ.\end{equation} Assume without loos of generality that $x_\circ = 0$.
Use the notation $$\mathcal{K}_k(t,\tau) = \mathrm{Re}\,[\overline{(g(f_{n_k}(t))-g(f_{n_k}(\tau)))}\cdot i
g'(f_{n_k}(\tau))].$$
On the other hand by \eqref{oh} and \eqref{kg}, for $$\sigma_k =
\min\{|f_{n_k}(t+\tau_k)-f_{n_k}(\tau_k)|, l-|f_{n_k}(t+\tau_k)-f_{n_k}(\tau_k)|\}$$ we obtain
$$\left|\mathcal{K}_k(t+\tau_k,\tau_k)\right|\le
\|f'_{n_k}\|\int_0^{\sigma_k}\omega(u)du,$$ where $\omega$ is the
modulus of continuity of $g'$. Since $\|f'_{n_k}\|\le \|f'\|$ we have
\begin{equation}\label{eee}\begin{split}\frac{\abs{\mathcal{K}_k(t+\tau_k,\tau_k)}}{2\sin^2\frac{t}{2}}&\le \frac{\|f'\|}{2\sin^2\frac{t}{2}}\int_0^{\sigma_k}\omega(u)du\\&\le\frac{\sigma_k}{t}\frac{\|f'\|\pi}{4
t^2}\int_0^{t}\omega\left(\frac{\sigma_k}{t}u\right)du\\&\le
\frac{\pi\|f'\|^2}{4t^2}\int_0^{t}\omega(\|f'\|u)du:=Q(t).\end{split}\end{equation}
Thus $Q(t)$ is a dominant for the expressions
$$\frac{\abs{\mathcal{K}_k(t+\tau_k,\tau_k)}}{2\sin^2\frac{t}{2}}.$$ Having
in mind the equation \eqref{goodd}, we obtain
\[\begin{split}\int_{-\pi}^{\pi} \abs{Q(t)}dt&\le
\frac{2\pi\|f'\|^2}{2}\int_0^{\pi}\frac{1}{t^2}\int_0^{t}\omega(\|f'\|u)du
\\&={\pi\|f'\|^2}\int_0^{\pi}\left(\frac{\omega(\|f'\|u)}{u}-\frac{\omega(\|f'\|u)}{\pi}
\right)du \\&<\infty.\end{split}\]
According to the
Lebesgue Dominated Convergence Theorem, taking the limit when $k\to \infty$ under the
integral sign in the integral \eqref{under} we obtain that $\displaystyle\lim_{k\to \infty} T_{f_{n_k}}(\tau_k)=T_{f_{n_\circ}}(0)$ if $n_k$ is a stationary sequence and $\displaystyle\lim_{k\to \infty} T_{f_{n_k}}(\tau_k)=T_{f}(0)$ in the other case. Similarly $\displaystyle\lim_{k\to \infty} T_{f_{n_k}}(0)=T_{f_{n_\circ}}(0)$ if $n_k$ is a stationary sequence and $\displaystyle\lim_{k\to \infty} T_{f_{n_k}}(0)=T_{f}(0)$ in the other case and this contradicts \eqref{epsilon}.

This implies that the family $\{T[f_n]\}$ is
equicontinuous.  By Arzela-Ascoli theorem it follows that
$$\lim_{n\to \infty} \|T[f_n] - T[f]\|=0.$$ Thus $T[f]$ is
continuous and for $\epsilon=\min_{t}T[f](t)$, there is $n_0$ such that $\|T[f_n] - T[f]\|\le \epsilon/2$ and therefore $T[f_n](t)\ge \epsilon/2$ for $n\ge n_0$ and $t\in[0,2\pi]$.
\\
Moreover, since $f_n$ is a diffeomorphism, for $n$ sufficiently
large there holds the following inequality
$$J_{w_n}(e^{i\tau})=f_n'(\tau)T[f_n](\tau)>0, \tau\in
[0,2\pi].$$ Since $f_n\in C^{\infty}$, it follows that
$w_n = P[F_n]\in C^1(\overline{\mathbf U}).$ Therefore all the
conditions of Proposition~\ref{ales} are satisfied. This means that
$w_n$ is a harmonic diffeomorphism of the unit disk onto the domain
$D$.

Since, by a result of Hengartner and Schober \cite{hsc}, the limit
function $w$ of a locally uniformly convergent sequence of univalent
harmonic mappings $w_n$ on $\mathbf U$ is either univalent on
$\mathbf U$, is a constant, or its image lies on a straight-line, we
obtain that $w=P[F]$ is univalent, because $F$ is a surjective function of $\mathbf{T}$ onto $\gamma$. The proof is completed.
\end{proof}

\bigskip

\noindent D. Kalaj -  Faculty of natural sciences and mathematics, University of Montenegro, Montenegro,  D\v zord\v za Va\v singtona  b.b. 81000, Podgorica, Montenegro, {davidk@t-com.me}


\begin{thebibliography}{1}

\bibitem{ale}

G. Alessandrini, V. Nesi:  {\it Invertible harmonic mappings, beyond
Kneser.} Ann. Scuola Norm. Sup. Pisa, Cl. Sci. (5) VIII (2009),
451-468.

\bibitem{an}
G. Alessandrini, V. Nesi: {\it Univalent $\sigma$-harmonic
mappings}.  Arch. Ration. Mech. Anal. 158 (2001),
no. 2, 155--171.


\bibitem{Rl}
R. S. Laugesen: {\it Planar harmonic maps with inner and Blaschke
dilatations}.  J. Lond. Math. Soc., II. Ser. 56, No.1, 37-48 (1997).


\bibitem{dure}
P. Duren, {\it Harmonic mappings in the plane.} Cambridge University
Press, 2004.

\bibitem{dur} P. Duren, W. Hengartner:
 {\it Harmonic mappings of multiply connected domains.} Pacific J.
Math. 180:2 (1997), 201 - 220.


\bibitem{hsc}  W. Hengartner and G. Schober: {\it Univalent harmonic
mappings.} Trans. Amer. Math. Soc. 299 (1987), 1-31.

\bibitem{hs}
 W. Hengartner and G. Schober:  {\it Harmonic mappings with given
dilatation.}  J. London Math. Soc. (2)  33  (1986),  no. 3,
473--483.
\bibitem{jj}
J. Jost: {\it Univalency of harmonic mappings between surfaces.} J.
Reine Angew. Math.  324  (1981), 141--153.

\bibitem{studia}
D. Kalaj, \emph{Invertible harmonic mappings beyond the Kneser theorem and quasiconformal harmonic mappings.}
Stud. Math. \textbf{207}, 117--136 (2011).


\bibitem{knes} H. Kneser: {\it L\"osung der Aufgabe 41}, Jahresber. Deutsch. Math.-Verein. { 35} (1926) 123-124.

\bibitem{l}
H. Lewy: {\it On the non-vanishing of the Jacobian in certain in
one-to-one mappings.} Bull. Amer. Math. Soc. 42 (1936), 689-692.




\bibitem{sy}
R. Schoen, S-T. Yau:  {\it On univalent harmonic maps between
surfaces.} Invent. Math. 44 (1978), no. 3, 265--278.

\bibitem{sf}
F. Schulz: {\it Univalent solutions of elliptic systems of
Heinz-Lewy type.} Ann. Inst. H. Poincar\'e Anal. Non Lin\'eaire 6
(1989), no. 5, 347--361.



\bibitem{ZY}
A. Zygmund:  {\it Trigonometric Series I.} Cambrige University
Press, 1958.


\end{thebibliography}
\end{document}